\newtheorem {theorem}{Theorem}[section]
\newtheorem {lemma}{Lemma}[section]
\newtheorem {problem}{Problem}
\newtheorem {definition}{Definition}[section]
\def\ar{a\kern-.370em\raise.16ex\hbox{\char95\kern-0.53ex\char'47}\kern.05em}
\def\ees{{\accent"5E e}\kern-.385em\raise.2ex\hbox{\char'23}\kern-.08em}
\def\EES{{\accent"5E E}\kern-.5em\raise.8ex\hbox{\char'23 }}
\def\ow{o\kern-.42em\raise.82ex\hbox{
   \vrule width .12em height .0ex depth .075ex \kern-0.16em \char'56}\kern-.07em}
\def\OW{O\kern-.460em\raise1.36ex\hbox{
\vrule width .13em height .0ex depth .075ex \kern-0.16em \char'56}\kern-.07em}
\title{A remark on the extension of a class of linear functionals}
\author{Ho\`ang Phi D\~ung $\dagger$}
\address{Laboratory of Applied Mathematics and Computing, Posts and Telecommunications Institute of Technology (PTIT), Vietnam}
\address{Faculty of Fundamental Studies, PTIT, Office A2, Fl.10, Km10 Nguyen Trai Rd., Ha Dong District, Hanoi, Vietnam}
\email{dunghp@ptit.edu.vn}
\date{}
\subjclass[2010]{Primary 46A22; Secondary 44A60.}
\keywords{Extension of positive linear functionals; theorems of Hahn-Banach type; extension and lifting of functionals and operators; moment problems}
\begin{document}

\maketitle
\begin{abstract}
We will remark an extension of a linear functional on subalgebra of algebra of continuous functions on subset of $\mathbb{R}^n$ which preserves positivity.
\end{abstract}
\section{Introduction}
In the works published in the late 1920s, Banach gave a sufficient condition to be able to extend a linear functional from a closed subspace of a Banach space to this Banach space (\cite{Banach}). After that, when studying classical moment problems, M. Riesz gave his general extension theorem, this theorem said that a positive linear functional on the convex cone $K$ (with some assumptions) has an positive extension (\cite{MRiesz}).  Researching on extension of a linear functional is meaningful and the results have many applications to many branches of mathematics such as: functional analysis, partial differential equations, convex analysis, optimization, general topology \dots Among them, class of continuous linear functionals have property: when extending this functional, it preserves positivity. This is an important functional class.

In Haviland's works on the problem of moments (\cite{Ha1, Ha2}), he gave a complete solution of following classical moment problem:
\begin{problem}[Classical K-moment problem]
The sequence $\{s_n\}_{n=0}^\infty (s_0 = 1)$ is a $K$-moment sequence if there exists a finite Borel measure (positive) $\mu$ on $K \subseteq \mathbb{R}$ such that: $$ s_k = \int_K x^k d\mu(x), k = 0,1,2, \dots $$
When does this sequence $\{s_n\}_{n=0}^\infty$ is a $K$-moment sequence?
\end{problem}
These problems have complete answers, but Haviland's solution is more generalized: The sequence $\{s_n\}_{n=0}^\infty (s_0 = 1)$ is a $K$-moment sequence if and only if $L(f) \ge 0$ for all polynomials $f \in \mathbb{R}[x]$ nonnegative on $K$, with $L$ is a positive linear functional on $\mathbb{R}[x]$. Actually, he extended $L$ from $\mathbb{R}[x]$ to larger algebra.

The modern moment problem is considered in the relation with theory of self-adjoint operators (see \cite{Fuglede, Si}), abelian semi-groups (see \cite{BCR, Berg}), positive definite functions (see \cite{BM, Schmudgen}), real algebraic geometry and optimization (see \cite{Schmudgen, Lasserre}). In \cite{Schmudgen}, the author gave a criterion for general K-moment problem in $\mathbb{R}^n$ with $K$ is a semialgebraic set.
Surveys of the classical moment problem and related topics the reader can find in \cite{Akh}, \cite{Sho}, \cite{Landau}.

In 1907, F. Riesz proved his Representation Theorem (\cite{FRiesz}), this theorem asserts that with any positive linear functional, there exists unique Borel measure such that this linear functional is an integral corresponding to above measure. In other words, there is a bijection between all positive linear functionals on $C_c(X)$ (space of continuous functions on $X \subseteq \mathbb{R}^n$ with the compact support) and all Borel measures. 

The problem of positive extension of linear functionals was developed by some authors, such as S. Mazur and W. Orlicz (\cite{MO}), O. Hustad (\cite{Hustad1, Hustad2}), Kantorovich (\cite{Kantorovich}), Krein and Smulian (\cite{Krein}), G. Choquet (\cite{Choq}), \dots 

The authors in \cite{MO} and \cite{Hustad2} used some linear inequalities to handle the problem of extension of positive linear functionals. Kantorovich and Krein-Smulian, independently, gave a sufficient condition to be able to extend a positive linear functional. Note that, these results can be proved by using Hahn-Banach Theorem. Recently, O. Olteanu explicitly pointed out the relation between the extension of positive linear functionals and the moment problems (see \cite{Olteanu1, Olteanu2}). In the past, G. Choquet (\cite{Choq}) gave some results on a bounded positive linear functional on space of continuous functions on a locally compact Hausdorff space $X$ can extend to an adapted space, a generalized object. He also gave a generalized solution of the moment problem.

Recently, M. Marshall improved G. Choquet's theorem (\cite[Theorem 3.1]{Marshall1}), the author also gave a general criterion for the representability of a positive linear functional as an integral.

In this paper, we give another version of Marshall's theorem, this version gave an extension of a positive linear functional from a subalgebra of $C(X)$ to larger subalgebra containing this subalgebra and the algebra of continuous functions with compact support, this extension preserves positivity of the linear functional. We give proof of this theorem by using Hahn-Banach theorem, being similar to Marshall's proof.
\section{Preliminaries}
\begin{definition} Let $X$ be a subset of $\mathbb{R}^n$ and $C(X)$ be algebra of continuous functions on $X$. A positive linear functional on $C(X)$ is a linear functional $L$ with $L(f) \ge 0$ for all $f \in C(X)$ such that $f(a) \ge 0, \forall a \in X$.
\end{definition}
We recall Haviland's result in \cite{Marshall2} (also see \cite{Ha1, Ha2}), with $\mathbb{R}[x_1, \dots, x_n]$ denotes the ring of real multivariable polynomials:
\begin{theorem}[Haviland]
For a linear functional $L: \mathbb{R}[x_1, \dots, x_n]$ and closed set $K$ in $\mathbb{R}^n$, the following are equivalent:
\begin{enumerate}
\item[{\rm(i)}] $L$ comes from a Borel measure on $K$, i.e., $\exists$ a Borel measure $\mu$ on $K$ such that, $\forall f \in \mathbb{R}[x_1, \dots, x_n], L(f) = \int f d\mu.$
\item[{\rm(ii)}] $L(f) \ge 0$ holds for all $f \in \mathbb{R}[x_1, \dots, x_n]$ such that $f \ge 0$ on $K$.
\end{enumerate}
\end{theorem}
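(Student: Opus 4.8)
\section{Proof proposal}

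The plan is to establish the nontrivial implication (ii) $\Rightarrow$ (i) by a Hahn--Banach-type extension followed by the Riesz representation theorem recalled in the introduction; the converse (i) $\Rightarrow$ (ii) is immediate, since if $L(f) = \int_K f\, d\mu$ with $\mu$ a positive Borel measure on $K$ and $f \ge 0$ on $K$, then $L(f) = \int_K f\, d\mu \ge 0$.

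For (ii) $\Rightarrow$ (i), note first that $K$, being closed in $\mathbb{R}^n$, is a locally compact Hausdorff space, and regard $\mathbb{R}[x_1,\dots,x_n]$ as a subspace of $C(K)$ by restriction (this is legitimate: if $g$ vanishes on $K$ then $g$ and $-g$ are $\ge 0$ on $K$, so (ii) forces $L(g)=0$). Set $p := 1 + x_1^2 + \cdots + x_n^2$, so that $p \ge 1$ on $\mathbb{R}^n$. I would work with $V := \mathbb{R}[x_1,\dots,x_n] + C_c(K)$ and the convex cone $V^+$ of those elements of $V$ that are $\ge 0$ on $K$. Two routine checks prepare the extension: $V = \mathbb{R}[x_1,\dots,x_n] + V^+$ --- given $h = g + \varphi$ with $g$ a polynomial and $\varphi \in C_c(K)$, write $h = (g - \|\varphi\|_\infty) + (\varphi + \|\varphi\|_\infty)$ --- and $L \ge 0$ on $\mathbb{R}[x_1,\dots,x_n] \cap V^+$, which is exactly hypothesis (ii). The M. Riesz extension theorem (itself a consequence of Hahn--Banach) then furnishes a linear functional $\tilde L : V \to \mathbb{R}$ extending $L$ with $\tilde L \ge 0$ on $V^+$. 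Restricting $\tilde L$ to $C_c(K)$ gives a positive linear functional, so by the F. Riesz representation theorem there is a positive Borel measure $\mu$ on $K$ with $\tilde L(\varphi) = \int_K \varphi\, d\mu$ for all $\varphi \in C_c(K)$.

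The heart of the matter is to upgrade this to $\tilde L(g) = \int_K g\, d\mu$ for every polynomial $g$ --- equivalently, to rule out mass of $\mu$ ``escaping to infinity''. For a polynomial $g \ge 0$ on $K$, approximating $g$ from below by functions $\varphi \in C_c(K)$ with $0 \le \varphi \le g$ gives $\int_K g\, d\mu \le \tilde L(g)$ (in particular $g \in L^1(\mu)$); the work is the reverse inequality. Fix $\epsilon > 0$ and put $h := g + p^k$ with $k$ chosen so that $2k > \deg g$. Then $h \ge g \ge 0$ on $K$ and $g/h \to 0$ as $\|x\| \to \infty$ along $K$, so $C_\epsilon := \{x \in K : g(x) \ge \epsilon\, h(x)\}$ is closed and bounded, hence compact. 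Pick $\psi \in C_c(K)$ with $0 \le \psi \le 1$ and $\psi \equiv 1$ on $C_\epsilon$; then $\psi g \in C_c(K)$ while $0 \le (1-\psi)g \le \epsilon h$ on $K$, whence
\[
\tilde L(g) = \tilde L(\psi g) + \tilde L\bigl((1-\psi)g\bigr) \le \int_K \psi g\, d\mu + \epsilon\, \tilde L(h) \le \int_K g\, d\mu + \epsilon\, L(h).
\]
Since $h$, and hence $L(h)$, is independent of $\epsilon$, letting $\epsilon \to 0$ yields $\tilde L(g) \le \int_K g\, d\mu$, so equality holds for every polynomial $g \ge 0$ on $K$. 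An arbitrary polynomial splits as $g = (g + cp^k) - cp^k$ with $c,k$ chosen so that $g + cp^k \ge 0$ on $\mathbb{R}^n$; both summands lie in $L^1(\mu)$ since their $\mu$-integrals equal their finite $L$-values, and linearity gives $L(g) = \int_K g\, d\mu$ for all $g \in \mathbb{R}[x_1,\dots,x_n]$, i.e. (i). (Incidentally $\mu(K) = L(1) < \infty$, so $\mu$ is finite.)

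The one genuinely delicate step is that final equality. The M. Riesz extension is far from unique, and an arbitrary positive extension $\tilde L$ of $L$ to $V$ could a priori ``see'' mass at infinity not recorded by $\mu$, leaving only the inequality $\int_K g\, d\mu \le \tilde L(g)$. What forces equality is the domination of $g$ by the strictly faster-growing polynomial $p^k$: it makes the exceptional set $C_\epsilon$ compact, hence absorbable into $C_c(K)$ at the cost of an error $\epsilon\, L(h)$ that vanishes with $\epsilon$. This is precisely the phenomenon encapsulated in Choquet's notion of an adapted subspace, and one could alternatively conclude by checking that $\mathbb{R}[x_1,\dots,x_n]$ restricted to $K$ is adapted in $C(K)$ and invoking Choquet's theorem.
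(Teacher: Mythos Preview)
The paper does not actually give a proof of Haviland's theorem; it is stated in Section~2 as a preliminary result, with references to \cite{Ha1, Ha2, Marshall2}. The paper does, however, indicate the intended strategy (``this theorem can be derived as a consequence of the Riesz Representation Theorem'') and carries it out for the special case $n=1$, $K=\mathbb{R}$ in its proof of Hamburger's theorem: extend $L$ via Theorem~\ref{thm1} to an algebra containing $C_c(K)$, apply Riesz representation to obtain $\mu$, and then pass from $C_c$ to polynomials by a limiting argument.

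Your proof follows the same overall architecture --- positive extension, then Riesz, then upgrade to polynomials --- but differs in two respects. First, you invoke the M.~Riesz extension theorem directly on $V=\mathbb{R}[x_1,\dots,x_n]+C_c(K)$ rather than the paper's Zorn-based Theorem~\ref{thm1}; either works here. Second, and more substantively, your handling of the final step is considerably more careful than the paper's treatment in the Hamburger proof. There the paper simply writes $L(x^n)=L(\lim_k g_{n,k})=\lim_k L(g_{n,k})$ and $\lim_k\int g_{n,k}\,d\mu=\int \lim_k g_{n,k}\,d\mu$ without justification; neither interchange is automatic, since the extended functional carries no a~priori continuity and dominated convergence requires an integrable majorant not yet established. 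Your $\epsilon$-argument with the dominating polynomial $p^k$ and the compact set $C_\epsilon$ is exactly what is needed to close this gap, and your remark that this is the adapted-space mechanism of Choquet is apt. In short, your proof is correct and in fact more rigorous on the delicate point than the paper's own argument for the one-dimensional special case.
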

In Haviland's theorem, a positive linear functional extended from ring of real multivariable polynomials to larger subalgebra and this theorem can be derived as a consequence of the following the Riesz Representation Theorem (see \cite[p. 77]{KS}):
\begin{theorem}[Riesz Representation Theorem]
Let $X$ be a locally compact Hausdorff space and let $L: C_c(X) \to \mathbb{R}$ be a positive linear functional. Then there exists a unique Borel measure $\mu$ on $X$ such that $$ L(f) = \int f d\mu, \forall f \in C_c(X). $$
$C_c(X)$ is the algebra of continuous functions with compact support. 
\end{theorem}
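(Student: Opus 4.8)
The final statement is the classical Riesz Representation Theorem, and the plan is to recover the Borel measure $\mu$ directly from the functional $L$ in the standard way, paying attention to the local compactness of $X$. First I would prescribe a candidate for $\mu$ on open sets and then push it out to all subsets: for an open $V\subseteq X$ put $\mu(V)=\sup\{L(f): f\in C_c(X),\ 0\le f\le 1,\ \operatorname{supp} f\subseteq V\}$, and for an arbitrary $E\subseteq X$ put $\mu^{*}(E)=\inf\{\mu(V): E\subseteq V,\ V\text{ open}\}$. Positivity and monotonicity of $L$ make this consistent and monotone, and $\mu$ is finite on every compact $K$: by Urysohn's lemma for locally compact Hausdorff spaces there is $f\in C_c(X)$ with $0\le f\le 1$, $f\equiv 1$ on $K$, supported in a relatively compact neighbourhood of $K$, whence $\mu(K)\le L(f)<\infty$.

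The second step is to check that $\mu^{*}$ is an outer measure and that every Borel subset of $X$ is Carath\'eodory measurable, so that its restriction $\mu$ to the Borel $\sigma$-algebra is a Borel measure; the technical inputs here are finite additivity of $\mu$ on disjoint compact sets and countable subadditivity of $\mu$ on open sets, both obtained from partitions of unity subordinate to finite open covers. Along the way one records the regularity built into the construction: $\mu$ is outer regular on all Borel sets and inner regular on open sets, hence on all sets of finite measure; this will be needed for uniqueness. (In the paper's setting $X\subseteq\mathbb{R}^{n}$ is second countable, so some of this regularity bookkeeping is automatic, but the LCH argument is not harder.)

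The third and central step is the identity $L(f)=\int_X f\,d\mu$ for real $f\in C_c(X)$; applying it to $-f$ reduces this to the inequality $L(f)\le\int_X f\,d\mu$. Here I would fix $f$ with values in $[a,b]$ and support $K$, choose $y_0<\dots<y_m$ of mesh $<\varepsilon$ with $y_0<a$, set $E_i=\{x\in K: y_{i-1}<f(x)\le y_i\}$, pick open $V_i\supseteq E_i$ on which $f<y_i+\varepsilon$ with $\mu(V_i)<\mu(E_i)+\varepsilon/m$, take $h_i\in C_c(X)$ with $0\le h_i\le 1$, $\operatorname{supp} h_i\subseteq V_i$, and $\sum_i h_i\equiv 1$ on $K$, and then estimate $L(f)=\sum_i L(h_i f)\le\sum_i(y_i+\varepsilon)L(h_i)$, while $\int_X f\,d\mu\ge\sum_i y_{i-1}\,\mu(E_i)$. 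Comparing these two expressions, using $\mu(K)\le\sum_i L(h_i)$ and the choices of the $V_i$, gives $L(f)\le\int_X f\,d\mu+C\varepsilon$ with $C$ depending only on $a,b,\mu(K)$, and $\varepsilon\to0$ finishes. I expect this squeezing estimate — forcing the upper Riemann-type sum for $L(f)$ and the lower sum for $\int f\,d\mu$ to come within $O(\varepsilon)$ of one another — to be the main obstacle, since it is exactly where Urysohn's lemma and the partitions of unity enter in an essential way.

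Finally, for uniqueness, if $\mu_1,\mu_2$ are Borel measures both representing $L$, both are outer regular and inner regular on open sets, so it suffices to show they agree on compact sets; and for compact $K$ and any open $V\supseteq K$, Urysohn gives $f\in C_c(X)$ with $\mathbf{1}_K\le f\le\mathbf{1}_V$, so $\mu_i(K)\le L(f)\le\mu_i(V)$ for $i=1,2$, and taking the infimum over such $V$ forces $\mu_1(K)=\mu_2(K)$, hence $\mu_1=\mu_2$ on all Borel sets.
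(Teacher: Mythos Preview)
Your outline is the standard Rudin-style construction of the representing measure via an outer measure generated from values of $L$ on open sets, followed by the slicing estimate for $L(f)\le\int f\,d\mu$ and the regularity argument for uniqueness; it is correct as sketched.

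However, there is nothing to compare it to: the paper does not prove this theorem. The Riesz Representation Theorem is stated in the Preliminaries section as a known result, with a citation to \cite[p.~77]{KS}, and is then invoked as a black box in the proof of Hamburger's theorem. So your proposal supplies a proof where the paper deliberately omits one; it is neither the paper's approach nor a different approach, because the paper has no approach at all for this statement.
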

The following lemma (see \cite{Akh} or \cite{Marshall2}) will be used to estasblish our results:
\begin{lemma}\label{lemma1} Suppose that $f$ is a non-zero polynomial in the one variable $x$ and $f(a) \ge 0, \forall a \in \mathbb{R}$. Then $f(x)$ is a sum of squares of polynomials.
\end{lemma}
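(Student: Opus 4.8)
The plan is to invoke the Fundamental Theorem of Algebra to factor $f$ completely over $\mathbb{C}$ and then reorganize the linear factors into real building blocks that are manifestly sums of squares. First I would write
$f(x)=c\prod_{i}(x-r_i)^{k_i}\prod_j\bigl((x-z_j)(x-\bar z_j)\bigr)^{m_j}$,
where $c\in\mathbb{R}\setminus\{0\}$ is the leading coefficient, the $r_i$ are the distinct real roots with multiplicities $k_i$, and the pairs $z_j,\bar z_j$ are the distinct non-real roots, which occur in conjugate pairs because $f$ has real coefficients.

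Next I would extract two sign constraints from the hypothesis. Since $f$ is non-zero it has only finitely many roots, so it has a well-defined sign on each side of any real root; as $f(a)\ge 0$ for all $a\in\mathbb{R}$, each real root $r_i$ must have even multiplicity $k_i=2\ell_i$ (otherwise $f$ would change sign across $r_i$), and letting $x\to+\infty$ forces $c>0$. Then, writing $c=(\sqrt{c}\,)^2$ and $(x-z_j)(x-\bar z_j)=(x-\operatorname{Re}z_j)^2+(\operatorname{Im}z_j)^2$, every elementary factor in the product is either a perfect square of a real polynomial or a sum of two squares of real polynomials.

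Finally I would assemble these pieces. The cleanest route is to set $p(x)=\sqrt{c}\,\prod_i(x-r_i)^{\ell_i}\prod_j(x-z_j)^{m_j}\in\mathbb{C}[x]$, so that $f=p\,\bar p$; writing $p=u+iv$ with $u,v\in\mathbb{R}[x]$ yields $f=u^2+v^2$ at once. Alternatively one can multiply out the real factors using the identity $(a^2+b^2)(c^2+d^2)=(ac-bd)^2+(ad+bc)^2$ (i.e.\ multiplicativity of the modulus on $\mathbb{C}$), which shows that a product of sums of two squares of polynomials is again such a sum, and then absorb the perfect-square factors and the constant $\sqrt c$.

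The step deserving the most care — and the only genuine subtlety — is the multiplicity/leading-coefficient argument: nonnegativity of $f$ on all of $\mathbb{R}$ must be translated into the statements that every real root occurs an even number of times and that $c>0$, and this is exactly where the hypothesis $f\ne 0$ enters. Everything after that is a formal manipulation of the factorization. I would also note in passing that the argument in fact gives the sharper conclusion that $f$ is a sum of just two squares of polynomials.
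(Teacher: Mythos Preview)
Your argument is correct and is the standard classical proof: factor over $\mathbb{C}$, observe that nonnegativity forces the leading coefficient to be positive and every real root to have even multiplicity, pair the non-real roots with their conjugates, and then use either the identity $p\bar p=u^2+v^2$ or the Brahmagupta--Fibonacci identity to collapse the product into a sum of two squares. The one place to be careful, as you say, is the sign/multiplicity step, and you handle it properly.

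As for comparison with the paper: the paper does not actually supply a proof of this lemma. It is stated with a reference to Akhiezer and to Marshall's monograph and then used as a black box. In fact, in the application to Hamburger's theorem the paper writes $f(x)=p^2(x)+q^2(x)$, i.e.\ it uses precisely the two-squares form that your argument produces, so your sharper observation that two squares suffice is exactly what the paper needs downstream. There is nothing to correct here; your write-up would serve as a self-contained proof the paper omits.
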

\section{Main results}
We give a generalized result of the extension of positive linear functionals. We learnt the method of proof of Marshall (\cite[Theorem 3.1]{Marshall1}) to prove this result
\begin{theorem}\label{thm1} Let $X$ be a locally Hausdorff space and $C(X)$ be a algebra of continuous functions on $X$. Suppose that $\mathcal{A}$ be a subalgebra (with unit 1) of $C(X)$, $X$ is a subset of $\mathbb{R}^n$. Then, for each linear functional $L : \mathcal{A} \to \mathbb{R}$ satisfying $$ L(a) \ge 0, \forall a \in \mathcal{A}\ \text{such that}\ a(x) \ge 0, \forall x \in X,$$ there exists an extension of $L$ on an algebra $C'(X)$ which contains $\mathcal{A} \cup C_c (X)$ and preserves positivity.
\end{theorem}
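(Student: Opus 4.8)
The plan is to prove Theorem \ref{thm1} by a direct Hahn--Banach argument of Kantorovich / Krein--Smulian type, exactly along the lines of Marshall's proof. First I would pin down the target algebra: put
\[
  C'(X) := \mathcal{A} + C_c(X) = \{\, b + g : b \in \mathcal{A},\ g \in C_c(X) \,\}.
\]
This manifestly contains $\mathcal{A} \cup C_c(X)$ and is a linear subspace of $C(X)$ containing the unit $1$; the only point needing an argument is closure under multiplication, and here the special role of $C_c(X)$ enters. If $g$ has compact support then $gh$ has compact support for every $h \in C(X)$, so in
\[
  (b_1 + g_1)(b_2 + g_2) = b_1 b_2 + \bigl(b_1 g_2 + g_1 b_2 + g_1 g_2\bigr)
\]
the first summand lies in $\mathcal{A}$ and the remaining three lie in $C_c(X)$. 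Hence $C'(X)$ is a subalgebra of $C(X)$ (indeed the smallest one containing $\mathcal{A} \cup C_c(X)$).

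Next I would introduce the sublinear functional carrying the positivity. On $C'(X)$ define
\[
  p(f) := \inf\{\, L(a) : a \in \mathcal{A},\ a(x) \ge f(x)\ \text{for all}\ x \in X \,\}.
\]
The load-bearing observation is that $\mathcal{A}$ is \emph{cofinal} in $C'(X)$ for the pointwise order: given $f = b + g$, the function $g$ is bounded, say $|g| \le M$ on $X$, and since $1 \in \mathcal{A}$ the element $a := b + M\cdot 1$ lies in $\mathcal{A}$ and dominates $f$ on $X$. So the infimum defining $p(f)$ runs over a nonempty set and $p(f) < \infty$ for all $f$. Since $p$ is visibly positively homogeneous and subadditive (if $a_i \ge f_i$ on $X$ then $a_1 + a_2 \ge f_1 + f_2$ on $X$) and $p(0) = 0$ (take $a = 0$, using positivity of $L$), the inequality $0 = p(0) \le p(f) + p(-f)$ together with $p(-f) < \infty$ forces $p(f) > -\infty$. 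Thus $p$ is a genuine real-valued sublinear functional on $C'(X)$.

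Finally I would apply the Hahn--Banach theorem. For $a \in \mathcal{A}$ one has $p(a) = L(a)$: the competitor $a' = a$ gives $p(a) \le L(a)$, and conversely any competitor $a'$ satisfies $a' - a \ge 0$ on $X$, hence $L(a') \ge L(a)$ by the hypothesis on $L$, giving $p(a) \ge L(a)$. In particular $L \le p$ on $\mathcal{A}$, so Hahn--Banach produces a linear functional $\widetilde{L} : C'(X) \to \mathbb{R}$ with $\widetilde{L}|_{\mathcal{A}} = L$ and $\widetilde{L} \le p$ on $C'(X)$. To check that $\widetilde{L}$ preserves positivity, let $f \in C'(X)$ with $f \ge 0$ on $X$; then $0 \ge -f$ on $X$, so the constant $0 \in \mathcal{A}$ is admissible in the infimum for $p(-f)$, whence $p(-f) \le L(0) = 0$ and therefore $\widetilde{L}(f) = -\widetilde{L}(-f) \ge -p(-f) \ge 0$. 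This is the required extension.

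Since the argument is this short, there is no real obstacle once $C'(X)$ has been chosen correctly: the single point on which everything rests is the cofinality of $\mathcal{A}$ in $\mathcal{A} + C_c(X)$, which is precisely what makes $p$ finite and hence Hahn--Banach usable, and which in turn uses only that $\mathcal{A}$ is unital while compactly supported functions are bounded and absorb multiplication. (Lemma \ref{lemma1} and the hypothesis $X \subseteq \mathbb{R}^n$ play no role in this extension statement as such; they would enter only if one further wished to represent $\widetilde{L}$ by a Borel measure through the Riesz Representation Theorem applied to its restriction to $C_c(X)$.)
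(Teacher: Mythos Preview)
Your argument is correct, but it differs from the paper's in two related ways. First, the paper chooses a strictly larger target algebra: there $C'(X)$ consists of \emph{all} continuous $f$ dominated pointwise by $|a|$ for some $a\in\mathcal{A}$ (so e.g.\ every bounded continuous function lies in it once $1\in\mathcal{A}$), whereas you take the minimal subalgebra $\mathcal{A}+C_c(X)$. Second, instead of packaging cofinality into a sublinear majorant $p$ and invoking Hahn--Banach as a black box, the paper reruns the Zorn's-lemma proof of Hahn--Banach directly in the ordered setting: it takes a maximal positive extension $(\bar L,V)$ and, for any $g\in C'(X)\setminus V$, uses $|g|\le|a|\le\tfrac{a^2+1}{2}$ to sandwich $g$ between elements of $V$, then extends $\bar L$ to $V\oplus\mathbb{R}g$ by choosing a value between $\sup\{\bar L(f_1):f_1\le g\}$ and $\inf\{\bar L(f_2):f_2\ge g\}$. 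The underlying idea---$\mathcal{A}$ is order-cofinal because $1\in\mathcal{A}$ and the relevant functions are bounded---is the same in both; your route is shorter and more transparent, while the paper's yields an extension on a larger domain, which is what one ultimately wants for the integral-representation application (approximating $x^n$ by $g_{n,k}\in C_c(\mathbb R)$ inside $C'(X)$).
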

\begin{proof}
We denote $C'(X)$ set of continuous real-valued function $$ f : X \to \mathbb{R} $$ and be bounded by $a \in \mathcal{A}$, i.e. there exists $a \in \mathcal{A}$ such that $|f(x)| \le |a(x)|, \forall x \in X$.

It is easy to see that $C'(X)$ is an algebra.

We have: $\mathcal{A} \subseteq C'(X) \subseteq C(X)$ and $C'(X) \subseteq C(X)$ is trivial.

Indeed, $\mathcal{A} \subseteq C'(X)$ because we get any $a \in \mathcal{A}$ with $a$ is a continuous function on $X$ and $\left| a\right| \le \left|a\right|$, thus $a \in C'(X)$. Moreover, $C_c(X) \subseteq C'(X)$. Thus $\mathcal{A} \cup C_c(X) \subseteq C'(X)$.

If $f \in C_c(X)$ then $f$ is continuous and has compact support. Hence, $\left|f\right| \le n$ and $n \in\mathcal{A}$ ($n = n\cdot \mathbf{1}, \mathbf{1} : X \to \mathbb{R}, x \mapsto 1$), thus, $f \in C'(X)$.

We will extend functional $L :\mathcal{A} \to \mathbb{R}$ from $\mathcal{A}$ to $C'(X)$ which preserves positive definite. Let $$\mathcal{E} = \{(\bar{L},V) | \mathcal{A} \subseteq V\ \text{and}\ \bar{L} : V \to \mathbb{R}\ \text{satisfies}\ \bar{L}(v) \ge 0, \forall v \ge 0, v \in V\ \text{and}\ \bar{L}_{|_\mathcal{A}} = L\}.$$
We consider ordered relation in $\mathcal{E}$: $(L_1,V_1) \le (L_2,V_2) \Leftrightarrow \begin{cases}
& V_1 \subseteq V_2\\
& {L_2}_{|_{V_1}} = L_1
\end{cases}$\\
Every chains are bounded by upper bound $(L', V')$ with $$ V' = \bigcup_{i=1}V_i \ \ (V_1 = A) $$ $$ L'_{|_{V_i}} = L_i. $$

Using Zorn's Lemma, $\mathcal{E}$ has maximum element and we call it $(\bar{L}, V)$: $$\mathcal{A} \subseteq V \subseteq C'(X).$$
We will prove that $V = C'(X)$. Suppose that there exists $g \in C'(X)$ satisfying $g \notin V$.\\
Because $g \in C'(X)$, $\exists a \in \mathcal{A}$ such that: $|g| \le |a|$ and $|a| \le \dfrac{a^2 + 1}{2}$ (by $(a \pm 1)^2 \ge 0$), therefore, $|g| \le \dfrac{a^2 + 1}{2}$ or $-\dfrac{a^2 + 1}{2} \le g \le \dfrac{a^2 + 1}{2}$.\\
Let $f_1 = - \dfrac{a^2 + 1}{2}, f_2 = \dfrac{a^2 + 1}{2}$, then $f_1 \le g \le f_2$. By $f_1, f_2 \in \mathcal{A}$, so $f_1, f_2 \in V$ with $f_1 \le f_2$, this implies $\bar{L}(f_1) \le \bar{L}(f_2)$. From that, let we consider $\sup\{\bar{L}(f_1) | f_1 \in V, f_1 \le g\}$ and $\inf\{\bar{L}(f_2) | f_2 \in V, g \le f_2\}$ with any $f_1, f_2$.\\
By the dense of $\mathbb{R}$, there exists $e \in \mathbb{R}$ such that: $$ \sup\{\bar{L}(f_1) | f_1 \in V, f_1 \le g\} \le e \le \inf\{\bar{L}(f_2) | f_2 \in V, g \le f_2\}, $$
then we can extend $\bar{L}$ on $\bar{V} = V \oplus \mathbb{R}g$ with $\bar{L}(g) = e$ with $\bar{L}(f  + dg) = \bar{L}(f) + de, d \in \mathbb{R}$.\\
Case 1: $d = 0$, $\bar{L}(f + 0.g) = \bar{L}(f) \ge 0, \forall f \ge 0$.\\
Case 2: $d > 0$, $f + dg \ge 0 \Rightarrow f \ge -dg \Rightarrow -\dfrac{f}{d} \le g$\\
$\Rightarrow\bar{L}(-\dfrac{f}{d}) \le \bar{L}(g)$\\
$\Rightarrow  \bar{L}(\dfrac{f}{d}+g) \le 0 \Rightarrow \bar{L}(f +dg) \ge 0$.\\
Case 3: $d < 0$, $f + dg \ge 0 \Rightarrow f \ge -dg \Rightarrow -\dfrac{f}{d} \ge g \Rightarrow \bar{L}(-\dfrac{f}{d}) \ge \bar{L}(g)$\\
$\Rightarrow \bar{L}(\dfrac{f}{d}+g) \le 0 \Rightarrow \bar{L}(f + dg ) \ge 0$.\\
So in three cases, we have $(\bar{L}, V)$ is not maximum element, this is contradiction.\\
This implies $V = C'(X)$ or $L : \mathcal{A} \to \mathbb{R}$ can be extended on $C'(X)$ such that $\mathcal{A} \cup C_c(X) \subseteq C'(X)$ and preserves positivity. 
\end{proof}
As a consequence, we will prove one of the classical moment theorem, that is Hamburger's moment theorem. Actually, this theorem can be implied by the Haviland's theorem, but in here, we will give another proof by using Theorem \ref{thm1} and the Riesz Representation theorem. 
\begin{theorem}[Hamburger, 1920] The sequence of real numbers $\{s_n\}_n$ is the moment sequence, or there exists measure $\mu$ such that $$ s_k = \int_{-\infty}^{+\infty}x^k d\mu(x), k =0, 1, 2, \dots $$ if and only if $\sum\limits_{k,l=1}^N s_{k+l}c_k.{c_l} \ge 0$ for any finite choice $N\in \mathbb{N}$ and $c_1, \dots, c_N \in \mathbb{R}$.
\end{theorem}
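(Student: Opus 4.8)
The plan is to prove the two implications separately. The necessity is the routine half: if $s_k=\int x^k\,d\mu$ for a positive Borel measure $\mu$ with finite moments, then for any $N$ and $c_1,\dots,c_N\in\mathbb{R}$ one has
\[
\sum_{k,l} s_{k+l}c_kc_l=\int\Big(\sum_k c_kx^k\Big)^2 d\mu\ge 0,
\]
the interchange of the finite sum and the integral being immediate.

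For sufficiency, I would start from the given positive semidefiniteness condition and define a linear functional $L:\mathbb{R}[x]\to\mathbb{R}$ by $L\big(\sum a_kx^k\big)=\sum a_ks_k$. If $q=\sum c_kx^k$ then $L(q^2)=\sum_{k,l}c_kc_ls_{k+l}\ge 0$, so $L$ is nonnegative on every sum of squares of polynomials. By Lemma~\ref{lemma1}, a nonzero $f\in\mathbb{R}[x]$ with $f\ge 0$ on $\mathbb{R}$ is exactly such a sum of squares, hence $L(f)\ge 0$; thus $L$ is a positive linear functional on the unital subalgebra $\mathcal{A}=\mathbb{R}[x]$ of $C(\mathbb{R})$. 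Applying Theorem~\ref{thm1} with $X=\mathbb{R}\subseteq\mathbb{R}^1$, I obtain a positivity-preserving extension $L'$ of $L$ to an algebra $C'(\mathbb{R})$ containing $\mathbb{R}[x]\cup C_c(\mathbb{R})$. The restriction of $L'$ to $C_c(\mathbb{R})$ is a positive linear functional, so the Riesz Representation Theorem provides a Borel measure $\mu$ on $\mathbb{R}$ with $L'(f)=\int f\,d\mu$ for all $f\in C_c(\mathbb{R})$.

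The remaining, and genuinely delicate, step is to recover $s_k=\int x^k\,d\mu$, since $x^k\notin C_c(\mathbb{R})$ while the Riesz theorem only controls $L'$ on $C_c(\mathbb{R})$. I would fix an increasing sequence of continuous cut-offs $\varphi_N$ with $0\le\varphi_N\le 1$, $\varphi_N\equiv 1$ on $[-N,N]$, and each $\varphi_N$ compactly supported (e.g. $\varphi_N(x)=\max(0,\min(1,N+1-|x|))$). Then $x^k\varphi_N\in C_c(\mathbb{R})$, while $x^k-x^k\varphi_N=x^k(1-\varphi_N)\in C'(\mathbb{R})$ as the difference of a polynomial and a compactly supported function — this is exactly the place where the definition of $C'(X)$ in Theorem~\ref{thm1} is used. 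Since $1-\varphi_N$ vanishes on $[-N,N]$ and $|x|^{k}\le x^{2k}/N^{k}$ on $\{|x|\ge N\}$, one gets pointwise bounds $-x^{2k}/N^{k}\le x^k(1-\varphi_N)\le x^{2k}/N^{k}$ for $k\ge 1$ (and, for $k=0$, the bound $x^{2}/N^{2}$ in place of $x^{2k}/N^{k}$). Applying $L'$ and using that it preserves positivity together with $L'(x^{2k})=s_{2k}$ yields $|L'(x^k(1-\varphi_N))|\le s_{2k}/N^{k}\to 0$, so $L'(x^k\varphi_N)\to L'(x^k)=s_k$. The same positivity estimate gives $\int x^{2k}\varphi_N\,d\mu=L'(x^{2k}\varphi_N)\le s_{2k}$, whence by monotone convergence $\int x^{2k}\,d\mu\le s_{2k}<\infty$ and $x^k\in L^1(\mu)$; dominated convergence then gives $\int x^k\varphi_N\,d\mu\to\int x^k\,d\mu$. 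Comparing the two limits of $L'(x^k\varphi_N)=\int x^k\varphi_N\,d\mu$ gives $s_k=\int x^k\,d\mu$.

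I expect the main obstacle to be precisely this last paragraph: the bookkeeping that simultaneously establishes finiteness of the moments of $\mu$ and their agreement with $\{s_n\}$, using the truncations $x^k\varphi_N$ as a bridge between $C_c(\mathbb{R})$ (where Riesz applies) and $\mathbb{R}[x]$ (where $L$ was defined), and exploiting that $C'(\mathbb{R})$ contains functions dominated by polynomials so that the positivity of $L'$ can be tested against the bounds above. Everything else — necessity, the reduction via Lemma~\ref{lemma1} to a positive functional on $\mathbb{R}[x]$, and the two black-box applications of Theorem~\ref{thm1} and the Riesz Representation Theorem — is straightforward.
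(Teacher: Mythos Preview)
Your proof is correct and follows the same route as the paper: necessity is immediate, and for sufficiency you define $L$ on $\mathbb{R}[x]$, use Lemma~\ref{lemma1} to get positivity, apply Theorem~\ref{thm1} and then the Riesz Representation Theorem, and finally pass from $C_c(\mathbb{R})$ to monomials via compactly supported cutoffs. Your last paragraph is in fact more carefully argued than the paper's version, which writes $L(\lim_k g_{n,k})=\lim_k L(g_{n,k})$ and $\lim_k\int g_{n,k}\,d\mu=\int \lim_k g_{n,k}\,d\mu$ without justification; your polynomial domination $|x^k(1-\varphi_N)|\le x^{2k}/N^k$ together with positivity of $L'$ and monotone/dominated convergence supplies exactly the missing estimates.
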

\begin{proof}
If there exists measure $\mu$ such that $s_k = \int\limits_{-\infty}^{+\infty}x^k d\mu(x), k =0, 1, 2, \dots$, then we have $$\sum_{k,l=1}^N s_{k+l}c_k.{c_l} = \sum_{k,l=1}^N \int_{-\infty}^{+\infty}x^{k+l}d\mu\cdot c_k.{c_l} = \int_{-\infty}^{+\infty}(\sum_{k=1}^N c_k x^{k})^2d\mu \ge 0.$$

Conversely, suppose that $\sum\limits_{k,l=1}^N s_{k+l}c_k.\bar{c_l} \ge 0$. Considering functional $$L : \mathbb{R}[x] \to \mathbb{R}\ \text{with}\ L(x^k) = s_k; k = 0, 1, \dots,$$ then the sum $\sum\limits_{k,l=1}^N s_{k+l}c_k.\bar{c_l} \ge 0$ is equivalent $L[(\sum_{k=1}^N c_k x^{k})^2] \ge 0$ or $L[p^2(x)] \ge 0$ with $p(x) = \sum_{k=1}^N c_k x^{k}$.

If $f(x) \in \mathbb{R}[x]$ and $f(x) \ge 0, \forall x \in \mathbb{R}$ then by lemma \ref{lemma1} $$f(x) = p^2(x) + q^2(x); p(x), q(x) \in \mathbb{R}[x].$$ Hence, $L(f(x)) \ge 0$ with $f(x) \ge 0, \forall x \in \mathbb{R}$. Indeed, $$L(f(x)) = L(p^2(x) + q^2(x)) = L(p^2(x)) + L(q^2(x)) \ge 0.$$

Using Theorem \ref{thm1}, we get $\mathcal{A} = \mathbb{R}[x]$ can be extended on $C'(X)$ which contains $\mathbb{R}[x] \cup C_c(\mathbb{R})$. And from Riesz Representation Theorem, there exists measure $\mu$ such that $$L(g) = \int_\mathbb{R}g(x) d\mu(x), \forall g \in C_c(\mathbb{R}).$$ 
With a monomial $x^n$, there exists a sequence $g_{n,k} \in C_c(\mathbb{R})$ such that $$ g_{n,k}(x) = \begin{cases}
x^n & x \in [-k, k]\\
0 & x \notin [-(k+1), (k+1)]
\end{cases}$$
 satisfying $$ x^n = \lim\limits_{k \to +\infty} g_{n,k}(x).$$   

So this implies \begin{align*}
s_n = L(x^n) = L(\lim\limits_{k \to +\infty} g_{n,k}(x)) &= \lim\limits_{k \to +\infty}L(g_{n,k}(x))\\ 
&= \lim\limits_{k \to +\infty}\int_\mathbb{R}g_{n,k}(x) d\mu(x) = \int_\mathbb{R}\lim\limits_{k \to +\infty} g_{n,k}(x) d\mu(x)\\
 &= \int_\mathbb{R}x^n d\mu(x).
\end{align*}
The theorem is proved.
\end{proof}

\noindent\textbf{Acknowledgements}\\
The author wishes to thank Dr Ho Minh Toan, Department of Mathematical Analysis, Institute of Mathematics, Hanoi for his helpful discussions.\\
\bibliographystyle{amsalpha}

\end{document}